\newtheorem{thm}{Theorem}[section]
\newtheorem{cor}[thm]{Corollary}
\newtheorem{lem}[thm]{Lemma}
\theoremstyle{definition}
\newtheorem{defn}[thm]{Definition}
\theoremstyle{remark}
\numberwithin{equation}{section}
\theoremstyle{definition}
\newcommand{\calU}{\mathcal{U}}
\newcommand{\calZ}{\mathcal{Z}}
\newcommand{\calV}{\mathcal{V}}
\def\bign#1{\mathclose{\hbox{$\left#1\vbox to8.5\p@{}\right.\n@space$}}\mathopen{}}
\DeclareMathOperator{\Ima}{Im}
\let\svthefootnote\thefootnote
\newcommand\freefootnote[1]{%
  \let\thefootnote\relax%
  \footnotetext{#1}%
  \let\thefootnote\svthefootnote%
}
\newcommand\nnfootnote[1]{%
  \begin{NoHyper}
  \renewcommand\thefootnote{}\footnote{#1}%
  \addtocounter{footnote}{-1}%
  \end{NoHyper}
}
\begin{document}  
\title{$K$-stability of $\calZ$-stable C*-algebras} 
\author{Shanshan Hua} 
\begin{abstract} 
We provide a shorter new proof of the fact that $\calZ$-stable C*-algebras are $K_{1}$-surjective using the R{\o}rdam-Winter picture of the Jiang-Su algebra $\calZ$. Consequently, we recapture the $K$-stability of $\calZ$-stable C*-algebras. 
\end{abstract} 
\maketitle 

\nnfootnote{This work forms part of the author's DPhil thesis at the University of Oxford.} 

\section{Introduction} 
Operator algebraic $K$-theory is defined using equivalence classes of unitaries and projections in matrix amplifications over a C*-algebra. By Bott periodicity, the $K$-theory of a unital C*-algebra $A$ is given by the higher homotopy groups of $\calU_{\infty}(A)$, where $\pi_{2n} (\calU_{\infty}(A)) \cong K_{1}(A)$ and $\pi_{2n+1} (\calU_{\infty}(A)) \cong K_{0}(A)$ for $n\in \mathbb{N}$, see \cite[Proposition 11.4.1]{Rordam-K} for instance. On the other hand, non-stable $K$-theory studies the homology theory given by the higher homotopy groups of the unitary group $\calU(A)$ based at $1_{A}$. The $K$-theory of stable C*-algebras can be computed without taking matrix amplications and thus coincides with their non-stable $K$-theory. However, $K$-theory and non-stable $K$-theory are different in general, even in the case of commutative C*-algebras. Take $C(S^{3})$ as an example, where we have $K_{1}(C(S^{3})) = \mathbb{Z}$, but $\pi_{0}(\calU(C(S^{3}))) = 0$. 

In many important cases, even without stability, the $K$-theory and non-stable $K$-theory agree, see \cite{Rieffel_tori} for noncommutative irrational tori and \cite{RR0_Zhang} for simple C*-algebras with real rank zero and stable rank one. In general, exact characterisations of when stable and non-stable $K$-theory coincide are not known, even at the level of $\pi_{0}$. Blackadar raised a question in \cite{Blackadar_nonstable_K} of when a C*-algebra is $K_{1}$-injective or $K_{1}$-surjective, namely when the canonical homomorphism from $\pi_{0}(\calU(A))$ to $K_{1}(A)$ is injective or surjective. A stronger notion called $K$-stability is defined by Thomsen in \cite{Thomsen}, which means that for any $n\in \mathbb{N}$, the canonical inclusion $\calU(M_{m}\otimes A)\rightarrow \calU_{\infty}(A)$ induces isomorphisms between higher homotopy groups of $\calU(M_{m}\otimes A)$ and the $K$-theory of $A$. C*-algebras are $K$-stable under certain structural conditions, such as having stable rank one in \cite{Rieffel_tori} or being simple and purely infinite in \cite{RR0_Zhang}. However, it remains open whether simple C*-algebras are automatically $K$-stable or $K_{1}$-isomorphic. 

Recently, non-stable $K$-theory has shown its importance in the new abstract approach of classification for C*-algebras initiated in \cite{reproof_TWW}. In the history of the Elliot classification program, a central object is the Jiang-Su algebra $\calZ$, which was originally constructed in \cite{Jiang-Su} as the inductive limit of dimension drop algebras, with explicitly defined connecting maps. Tensorial absorption of $\calZ$ excludes the exotic C*-algebras constructed in \cite{Villadsen}, \cite{Rordam-counter}, \cite{Toms-counter} and is needed to obtain a satisfying classification theorem for simple separable nuclear C*-algebras by $K$-theory and traces. Though this is a different tensorial absorption property from stability, it does imply similar nice $K$-theoretical properties. For simple $\calZ$-stable exact C*-algebras, a dichotomy in \cite{Dichotomy} shows that they are either purely infinite or stably finite. R{\o}rdam proved in 2004 in \cite{SR_RR_Z} that simple stably finite $\calZ$-stable C*-algebras necessarily have stable rank one, by extensively studying structural properties of $\calZ$-stable C*-algebras. Then combining results in \cite{RR0_Zhang} and \cite{Rieffel_tori}, one can conclude that $\calZ$-stability implies $K$-stability for simple C*-algebras. 

The result that $\calZ$-stable C*-algebras are $K$-stable in the general setting is actually proven ealier by Jiang in his unpublished paper \cite{Jiang}, which in particular implies $K_{1}$-injectivity and $K_{1}$-surjectivity for $\calZ$-stable C*-algebras. In the new abstract approach to the classification theorem in \cite{big_classification}, $K_{1}$-injectivity for certain non-simple $\calZ$-stable C*-algebras plays an important role in \cite[Section 5.4]{big_classification} to obtain the uniqueness theorem for maps. Since the result in \cite{Jiang} remains unpublished, a short and self-contained new proof for $K_{1}$-injectivity of $\calZ$-stable C*-algebras was provided in \cite[Section 4.2]{big_classification}. The proof takes advantage of a modern description of $\calZ$, given by R{\o}rdam and Winter in \cite{new-pic}, around ten years after its introduction. They realize $\calZ$ as a stationary inductive limit of the so-called generalized dimension drop algebras, whose fibres are UHF-algebras instead of finite matrix algebras. This should be seen as an alternative definition for $\calZ$, since many fundamental properties of $\calZ$ can be readily obtained from this viewpoint, for instance in reproving strong self-absorption of $\calZ$ in \cite{Andre} and Winter's localization of the Elliott conjecture in \cite{localize}, which is vital in the Gong-Lin-Niu theorem \cite{GLN}.   

In this paper, we will recapture the rest of Jiang's $K$-stability results. Jiang's proof relies on explicit constructions of homomorphisms to produce inverse maps between higher homotopy groups when proving $K$-stability, which then implies $K_{1}$-injectivity and $K_{1}$-surjectivity of $\calZ$-stable C*-algebras. In comparison, we first provide a short new proof for $K_{1}$-surjectivity of $\calZ$-stable C*-algebras in Section 3, using the R{\o}rdam-Winter picture in the spirit of the $K_{1}$-injectivity proof of \cite{big_classification}. The extra flexibility is given by UHF-fibers of generalized dimension drop algebras, since $M_{n^{\infty}}$-stable C*-algebras are shown to have nice $K$-theoretical properties in Lemma \ref{UHF-surjective} and \cite[Lemma 4.9]{big_classification}, which is not true in general when tensoring by finite matrix algebras. With $K_{1}$-injectivity and $K_{1}$-surjectivity of $\calZ$-stable C*-algebras, we are able to recapture $K$-stability in Theorem \ref{main}, without needing to construct explicit maps. 

The result in the paper is becoming increasingly important. For instance, in Toms' recent work \cite{homotopy_cuntz} on higher homotopy groups of Cuntz classes for simple approximately divisible C*-algebras of real rank zero, $K_{1}$-surjectivity plays an important role, and it is likely that this will continue to be the case in further work in this direction. 

\section{Preliminaries} 
For a unital C*-algebra $A$, we will denote the set of untaries in $A$ by $\calU(A)$. For $n\geq 1$, we will use the notation $1_{n}$ for the unit of $M_{n}\otimes A$ and $\calU_{n}(A)$ for the set of unitaries in $M_{n}\otimes A$. The set of unitaries in $\calU_{n}(A)$ that are homotopic to $1_{n}$ is denoted by $\calU^{0}_{n}(A)$. If $A$ is a non-unital C*-algebra, we denote by $\tilde{A}$ the unitization of $A$, with canonically associated maps $\iota: A\rightarrow\tilde{A}$ and $q: \tilde{A}\rightarrow \mathbb{C}$. 

\begin{defn} \label{defn_K} 
A unital C*-algebra $A$ is called \textit{$K_{1}$-surjective ($K_{1}$-injective)} if the group homomorphism $\calU(A)/\calU^{0}(A)\rightarrow K_{1}(A)$, $[u]\mapsto [u]_{1}$, is surjective (injective). 

In the non-unital case, a C*-algebra $A$ is called \textit{$K_{1}$-surjective ($K_{1}$-injective)} if $\tilde{A}$ is $K_{1}$-surjective ($K_{1}$-injective). 
\end{defn} 

We will also describe briefly the different descriptions of $\calZ$. For coprime integers $p, q$, the \textit{prime dimension drop algebra} is defined to be 
\begin{equation} 
\calZ_{p,q} = \{f\in C([0,1], M_{p}\otimes M_{q}): f(0)\in M_{p}\otimes 1_{q}, f(1) \in 1_{p}\otimes M_{q}\}. 
\end{equation} 
The Jiang-Su algebra $\calZ$ was originally constructed in \cite{Jiang-Su} as an inductive limit of such $\calZ_{p,q}$, together with delicately defined connecting maps. In fact, it is the unique inductive limit of prime dimension drop algebras that is unital, simple, monotracial and $K_{\ast}(A)\cong K_{\ast} (\mathbb{C})$ by \cite[Theorem 6.2]{Jiang-Su}. 

The R{\o}rdam-Winter picture of $\calZ$ uses the \textit{generalized prime dimension drop algebra} corresponding to infinite coprime supernatural numbers $\mathfrak{p}$ and $\mathfrak{q}$, which is defined by 
\begin{equation}
\calZ_{\mathfrak{p}, \mathfrak{q}}=\big\{f\in C([0,1], M_{\mathfrak{p}}\otimes M_{\mathfrak{q}}):  f(0)\in M_{\mathfrak{p}}\otimes 1_{\mathfrak{q}}, f(1)\in 1_{\mathfrak{p}}\otimes M_{\mathfrak{q}}\big\}. 
\end{equation} 

A unital endomorphism $\varphi$ on a unital C*-algebra $A$ is said to be \textit{trace-collapsing} if $\tau \circ \varphi = \tau'\circ \varphi$ for any $\tau, \tau'\in T(A)$. Then the following theorem is proven. 

\begin{thm} \label{new-pic} 
(\cite[Theorem 3.4]{new-pic}) Let $\mathfrak{p}, \mathfrak{q}$ be infinite coprime supernatural numbers. Then there exists a trace-collapsing unital endomorphism $\varphi$ on $\calZ_{\mathfrak{p}, \mathfrak{q}}$. For any such $\varphi$, the Jiang-Su algebra $\calZ$ is isomorphic to the stationary inductive limit of the sequence 
\begin{equation*}
\begin{tikzcd}
\calZ_{\mathfrak{p}, \mathfrak{q}} \arrow[r, "\varphi"] & \calZ_{\mathfrak{p}, \mathfrak{q}} \arrow[r, "\varphi"] & \calZ_{\mathfrak{p}, \mathfrak{q}} \arrow[r, "\varphi"] & \cdots. 
\end{tikzcd}
\end{equation*} 
\end{thm}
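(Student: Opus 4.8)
The plan is to recognise the stationary limit $A\defeq\varinjlim(\calZ_{\mathfrak{p},\mathfrak{q}},\varphi)$ as $\calZ$ via the Jiang--Su uniqueness theorem \cite[Theorem 6.2]{Jiang-Su}, which characterises $\calZ$ as the unique unital simple monotracial inductive limit of prime dimension drop algebras with $K_{0}\cong\mathbb{Z}$ (generated by the class of the unit) and $K_{1}=0$. So, having first produced a trace-collapsing unital endomorphism $\varphi$ of $\calZ_{\mathfrak{p},\mathfrak{q}}$, one must verify that $A$ (i) is an inductive limit of prime dimension drop algebras, (ii) has the indicated $K$-theory, (iii) is monotracial, and (iv) is simple. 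I will arrange that (i)--(iv) go through for any trace-collapsing unital endomorphism, yielding the ``for any such $\varphi$'' clause as well.

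Points (i) and (ii) are routine. Since each $\calZ_{\mathfrak{p},\mathfrak{q}}$ is the inductive limit $\varinjlim_{n}\calZ_{p_{n},q_{n}}$ over finite divisors $p_{n}\nearrow\mathfrak{p}$, $q_{n}\nearrow\mathfrak{q}$ (via the inclusions induced by $M_{p_{n}}\subseteq M_{\mathfrak{p}}$ and $M_{q_{n}}\subseteq M_{\mathfrak{q}}$), the algebra $A$ is an iterated inductive limit of prime dimension drop algebras, and a standard diagonalisation---straightening the approximate factorisations of the copies of $\varphi$ through the $\calZ_{p_{n},q_{n}}$ into genuine ones, which is possible because prime dimension drop algebras are semiprojective---displays $A$ as an honest inductive limit of prime dimension drop algebras. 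For (ii), feed the extension $0\to C_{0}((0,1),M_{\mathfrak{p}}\otimes M_{\mathfrak{q}})\to\calZ_{\mathfrak{p},\mathfrak{q}}\to M_{\mathfrak{p}}\oplus M_{\mathfrak{q}}\to 0$ into the six-term exact sequence: with $K_{0}(M_{\mathfrak{p}}\oplus M_{\mathfrak{q}})=\mathbb{Z}[1/\mathfrak{p}]\oplus\mathbb{Z}[1/\mathfrak{q}]$, with $K_{1}$ of the ideal equal to $\mathbb{Z}[1/\mathfrak{p}\mathfrak{q}]$, with $K_{0}$ of the ideal and $K_{1}(M_{\mathfrak{p}}\oplus M_{\mathfrak{q}})$ both zero, and with the connecting homomorphism the difference map $(a,b)\mapsto a-b$ inside $\mathbb{Q}$, coprimality of $\mathfrak{p},\mathfrak{q}$ (giving $\mathbb{Z}[1/\mathfrak{p}]\cap\mathbb{Z}[1/\mathfrak{q}]=\mathbb{Z}$ and $\mathbb{Z}[1/\mathfrak{p}]+\mathbb{Z}[1/\mathfrak{q}]=\mathbb{Z}[1/\mathfrak{p}\mathfrak{q}]$) forces $K_{0}(\calZ_{\mathfrak{p},\mathfrak{q}})=\mathbb{Z}\cdot[1]$ and $K_{1}(\calZ_{\mathfrak{p},\mathfrak{q}})=0$; any unital endomorphism is the identity on $K_{0}$, so $K_{0}(A)=\mathbb{Z}\cdot[1_{A}]$ and $K_{1}(A)=0$.

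The heart of the matter is the construction of $\varphi$. One produces a norm-continuous path $(\psi_{t})_{t\in[0,1]}$ of unital $*$-homomorphisms $\psi_{t}\colon\calZ_{\mathfrak{p},\mathfrak{q}}\to M_{\mathfrak{p}}\otimes M_{\mathfrak{q}}$ with $\psi_{0}(\calZ_{\mathfrak{p},\mathfrak{q}})\subseteq M_{\mathfrak{p}}\otimes 1_{\mathfrak{q}}$, $\psi_{1}(\calZ_{\mathfrak{p},\mathfrak{q}})\subseteq 1_{\mathfrak{p}}\otimes M_{\mathfrak{q}}$, and $\operatorname{tr}\circ\psi_{t}$ equal to a single faithful tracial state $\tau_{0}$ of $\calZ_{\mathfrak{p},\mathfrak{q}}$ (e.g.\ the one coming from Lebesgue measure), independently of $t$; then $\varphi(f)(t)\defeq\psi_{t}(f)$ is the required endomorphism. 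The $\psi_{t}$ are assembled from ``sampling'' homomorphisms $f\mapsto u\,\operatorname{diag}(f\circ\xi_{1},\dots,f\circ\xi_{m})\,u^{*}$ with continuous $\xi_{i}\colon[0,1]\to[0,1]$ and unitaries $u\in M_{\mathfrak{p}}\otimes M_{\mathfrak{q}}$, the essential resource being that the UHF fibres provide unital copies of every matrix algebra $M_{m}$ with $m\mid\mathfrak{p}\mathfrak{q}$, a connected unitary group, and the self-absorption $M_{\mathfrak{p}}\otimes M_{\mathfrak{q}}\cong(M_{\mathfrak{p}}\otimes M_{\mathfrak{q}})^{\otimes k}$---just enough flexibility to meet both endpoint conditions while making the fibrewise trace $t$-independent; this is exactly where coprimality enters and is the content of \cite{new-pic}. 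Granting the construction, (iii) is immediate: every tracial state of $\calZ_{\mathfrak{p},\mathfrak{q}}$ has the form $g\mapsto\int_{0}^{1}\operatorname{tr}(g(t))\,d\mu(t)$ for a probability measure $\mu$, and since $\operatorname{tr}(\varphi(f)(t))=\tau_{0}(f)$ for all $t$ one gets $\tau\circ\varphi=\tau_{0}$ for every trace $\tau$ of $\calZ_{\mathfrak{p},\mathfrak{q}}$---the trace-collapsing property, automatic for any such $\varphi$---whence $T(A)=\varprojlim T(\calZ_{\mathfrak{p},\mathfrak{q}})$ is an inverse limit along a constant map, i.e.\ a single point.

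For (iv), faithfulness of the collapsed trace does the job: a nonzero ideal $I$ of $A$ meets some copy $A_{n}\cong\calZ_{\mathfrak{p},\mathfrak{q}}$ in a nonzero ideal, so a nonzero positive $f\in I\cap A_{n}$ has $\operatorname{tr}(\varphi(f)(t))=\tau_{0}(f)>0$ for all $t$, making $\varphi(f)$ a full element of the copy $A_{n+1}$; hence $I\supseteq A_{n+1}\ni 1_{A}$ and $I=A$. (For a general trace-collapsing unital endomorphism one first checks the collapsed trace is faithful---equivalently, that the endomorphism is injective---using coprimality to rule out factorisations through the boundary quotients.) Now $A$ is a unital simple monotracial inductive limit of prime dimension drop algebras with $K_{0}(A)=\mathbb{Z}\cdot[1_{A}]$ and $K_{1}(A)=0$, so \cite[Theorem 6.2]{Jiang-Su} gives $A\cong\calZ$, as claimed. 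The one genuinely hard step is the construction of $\varphi$: coprimality of $\mathfrak{p}$ and $\mathfrak{q}$ blocks the obvious attempts---one cannot meet the two endpoint fibre conditions by conjugating a fixed element, nor by a finite point-evaluation diagonal with $t$-independent weights---so the argument must exploit the internal structure of the UHF fibres of $\calZ_{\mathfrak{p},\mathfrak{q}}$ in an essential way, and this is the technical core of \cite{new-pic}.
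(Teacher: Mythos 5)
First, note that the paper does not prove this statement at all: Theorem \ref{new-pic} is imported verbatim from R{\o}rdam--Winter \cite[Theorem 3.4]{new-pic}, so there is no in-paper proof to compare against. Measured against the original source, your outline is essentially the right strategy and matches theirs: reduce to Jiang and Su's uniqueness theorem by checking that the stationary limit is a unital, simple, monotracial inductive limit of prime dimension drop algebras with $(K_0,[1],K_1)=(\mathbb{Z},1,0)$. Your verifications of (i)--(iv) are sound where you carry them out: the six-term computation with $\mathbb{Z}[1/\mathfrak{p}]\cap\mathbb{Z}[1/\mathfrak{q}]=\mathbb{Z}$ and $\mathbb{Z}[1/\mathfrak{p}]+\mathbb{Z}[1/\mathfrak{q}]=\mathbb{Z}[1/\mathfrak{p}\mathfrak{q}]$ is correct, the inverse limit of tracial simplices along a constant map is a point, and the fullness argument for simplicity works once the collapsed trace is known to be faithful.

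The genuine gap is that the existence of $\varphi$ --- which is half of the statement --- is described but not established. Saying that the $\psi_t$ are ``assembled from sampling homomorphisms'' with $t$-independent fibrewise trace names the target, not the construction: the hard points are producing unital $*$-homomorphisms from $\calZ_{\mathfrak{p},\mathfrak{q}}$ into $M_{\mathfrak{p}}$ and into $M_{\mathfrak{q}}$ (these necessarily factor through finite quotients $\calZ_{p,q}|_F$, and one needs coprimality plus the Frobenius/B\'ezout count to embed $M_p\oplus M_q$ unitally into large matrix subalgebras of $M_{\mathfrak{p}}$), and then interpolating continuously between the two endpoint maps while keeping $\operatorname{tr}\circ\psi_t$ constant. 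You correctly flag this as the technical core, but a proof must contain it. Two smaller points also need writing out: (a) the claim that any trace-collapsing unital endomorphism is automatically injective (equivalently, that the collapsed trace is faithful) requires an argument --- a proper quotient of $\calZ_{\mathfrak{p},\mathfrak{q}}$ contains a projection $p$ with $n[p]=[1]$ for some $n>1$ dividing $\mathfrak{p}$ or $\mathfrak{q}$, which cannot map unitally and injectively into an algebra with $K_0=\mathbb{Z}\cdot[1]$; and (b) the diagonalisation in (i) needs the (weak) semiprojectivity and finite generation of prime dimension drop algebras cited explicitly, since the compositions $\calZ_{p_n,q_n}\hookrightarrow\calZ_{\mathfrak{p},\mathfrak{q}}\xrightarrow{\varphi}\calZ_{\mathfrak{p},\mathfrak{q}}$ only factor through finite stages approximately.
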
 

As noted in the introduction, this viewpoint for $\calZ$ is used to reprove $K_{1}$-injectivity of unital $\calZ$-stable C*-algebras in \cite[Theorem 4.8]{big_classification}. This quickly extends to the non-unital case, which recaptures Jiang's $K_{1}$-injectivity result in \cite[Theorem 2]{Jiang}. 

\begin{thm} \label{K1-inj} 
If $A$ is a C*-algebra, then $A\otimes \calZ$ is $K_{1}$-injective. 
\end{thm}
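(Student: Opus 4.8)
The plan is to reduce the non-unital case to the unital one, for which we invoke \cite[Theorem 4.8]{big_classification}. First I would observe that $\calZ$ is unital and that $A \otimes \calZ$ is $\calZ$-stable (this uses strong self-absorption of $\calZ$ and separability is not needed for the statement $\calZ\otimes\calZ\cong\calZ$ in the form we need), so it suffices to prove: every $\calZ$-stable C*-algebra is $K_1$-injective, in both the unital and non-unital cases.

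For the \emph{unital} case, this is exactly \cite[Theorem 4.8]{big_classification}: a unital $\calZ$-stable C*-algebra $B$ has $\tilde B \cong B \oplus \C$ when we pass to unitizations in the appropriate sense, or more directly, $B$ itself is unital and the canonical map $\calU(B)/\calU^0(B) \to K_1(B)$ is injective by that theorem. So the content is the \emph{non-unital} case. Let $B = A \otimes \calZ$ and assume $B$ is non-unital. By Definition \ref{defn_K}, I must show $\tilde B$ is $K_1$-injective. The key structural point is that $\tilde B$ sits in a split short exact sequence $0 \to B \to \tilde B \xrightarrow{q} \C \to 0$, which induces a split exact sequence $0 \to K_1(B) \to K_1(\tilde B) \to K_1(\C) = 0$, so $K_1(\tilde B) \cong K_1(B)$; and likewise at the level of unitary groups one has, via the splitting, a decomposition of $\calU(\tilde B)/\calU^0(\tilde B)$. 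Concretely, any unitary $u \in \tilde B$ with $[u]_1 = 0$ in $K_1(\tilde B) = K_1(B)$ can, after multiplying by the scalar unitary $q(u)^{-1} \in \C \subseteq \tilde B$ (which is automatically in $\calU^0(\tilde B)$), be assumed to lie in $1 + B$.

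So I am reduced to showing: if $u \in 1+B \subseteq \tilde B$ is a unitary with $[u]_1 = 0$, then $u \in \calU^0(\tilde B)$. Here I would use $\calZ$-stability of $B$ to bring in matrix amplifications cheaply: $B \cong B \otimes \calZ \cong (B \otimes M_n) \otimes \calZ$-type tricks are not literally available, but the standard maneuver is that for $\calZ$-stable (indeed for any C*-algebra) one has $K_1(B) = \varinjlim \calU_n(\tilde B)/\calU_n^0(\tilde B)$, so $[u]_1 = 0$ means $u \oplus 1_{n-1} \in \calU_n^0(\tilde B)$ for some $n$. Now I invoke the unital $K_1$-injectivity input together with the absorption $M_n(\tilde B) $ and the fact—this is the place where the R{\o}rdam--Winter UHF-fibre flexibility enters, cf.\ Lemma \ref{UHF-surjective} and \cite[Lemma 4.9]{big_classification}—that $\calZ$-stability lets one \emph{de-stabilize}: a unitary in $M_n$ over a $\calZ$-stable algebra that is homotopic to $1_n$ can be replaced, up to homotopy in $\calU^0$, by one of the form $v \oplus 1_{n-1}$ with $v$ a unitary in the algebra itself homotopic to $1$. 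Applying this to $u \oplus 1_{n-1}$ and tracking the homotopy back down gives $u \in \calU^0(\tilde B)$.

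The main obstacle I expect is the last de-stabilization step: showing that homotopy-triviality of $u \oplus 1_{n-1}$ in $\calU_n^0(\tilde B)$ forces $u \in \calU^0(\tilde B)$ when $B$ is $\calZ$-stable and non-unital. One cannot just quote the unital theorem, because $\tilde B$ is \emph{not} $\calZ$-stable (it is the unitization of a $\calZ$-stable algebra, so it has a one-dimensional quotient). The fix is to work inside $B$ rather than $\tilde B$: use that $B$ absorbs $\calZ$, pick an isomorphism $B \cong B \otimes \calZ$, use the half-flip / approximate inner structure of $\calZ$ to conjugate $u \oplus 1_{n-1}$ (regarded via a rank-$n$ projection coming from a copy of $M_n$ inside a UHF fibre of $\calZ$) back to a unitary of the form $u' \oplus 1$ with $u'$ close to $u$, and finally patch the two homotopies. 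Carefully, this is the non-unital analogue of the argument in \cite[Section 4.2]{big_classification}, and I would present it by first recording the unital non-stable statement I need and then running the unitization/splitting bookkeeping; once the unital case and the $\calZ$-absorbing de-stabilization lemma are in hand, the non-unital case follows formally.
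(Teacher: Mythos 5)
Your reduction to the unital case of \cite[Theorem 4.8]{big_classification} is the right starting point, and your observation that $\tilde B$ is not $\calZ$-stable correctly identifies the obstacle. But the way you propose to get around it --- de-stabilizing a null-homotopy of $u\oplus 1_{n-1}$ in $\calU_n(\tilde B)$ down to $\calU(\tilde B)$ --- is a genuine gap, not a routine step. The mechanism you sketch does not exist as stated: $\calZ$ is projectionless, so there is no ``rank-$n$ projection coming from a copy of $M_n$ inside a UHF fibre of $\calZ$'' available inside $B\cong B\otimes\calZ$; and even if you could conjugate $u\oplus 1_{n-1}$ into the form $u'\oplus 1_{n-1}$, that would not show $u'$ is null-homotopic in $\calU(\tilde B)$ unless you can compress the entire homotopy down to the $(1,1)$-corner, which is precisely the statement being proved. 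A de-stabilization lemma of this kind for the non-$\calZ$-stable algebra $\tilde B$ is essentially equivalent to the theorem itself, so as written the argument is circular at its key step.

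The paper sidesteps all of this with a short structural observation that your outline is missing: $(A\otimes\calZ)^\sim$ embeds unitally into $\tilde A\otimes\calZ$, which \emph{is} unital and $\calZ$-stable, so the unital theorem applies there directly, with no matrix amplification at all. Given $u\in\calU((A\otimes\calZ)^\sim)$ with $[u]_1=0$, one obtains a path $(u_t)_{t\in[0,1]}$ from $1$ to $u$ in $\calU(\tilde A\otimes\calZ)$, and then corrects it by setting $v_t=(1_{\tilde A}\otimes(q\otimes\mathrm{id}_{\calZ})(u_t))^{*}u_t$; since $(q\otimes\mathrm{id}_{\calZ})(v_t)=1_{\calZ}$ for all $t$, the corrected path stays inside $\calU((A\otimes\calZ)^\sim)$ and connects $1$ to $\overline{\lambda}u\sim_h u$. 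If you want to salvage your outline, replace the de-stabilization step with this embedding-and-path-correction argument.
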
 

\begin{proof} 
The unital case is proven in \cite[Theorem 4.8]{big_classification}. When $A$ is non-unital, take $u\in \calU((A\otimes \calZ)^{\sim})$ with $[u]_{1} = 0\in K_{1}(A\otimes \calZ)$. Then $[u]_{1} = 0\in K_{1}(\tilde{A}\otimes \calZ)$ and $(q\otimes \mathrm{id}_{\calZ}(u)) = \lambda 1_{\calZ}$ for some $\lambda\in \mathbb{T}$. Since $\tilde{A}\otimes \calZ$ is $K_{1}$-injective, there is a path of unitaries $(u_{t})_{t\in [0,1]} \subseteq \calU(\tilde{A}\otimes \calZ)$ such that $u_{0} = 1_{\tilde{A}\otimes \calZ}$ and $u_{1} = u$. For $t\in [0,1]$, take $v_{t} = (1_{\tilde{A}}\otimes (q\otimes \mathrm{id}_{\calZ})(u_{t}))^{\ast} u_{t}$, then $(v_{t})_{t\in [0,1]}$ is a path of unitaries in $\calU(\tilde{A}\otimes \calZ)$ with $v_{0} = 1_{\tilde{A}\otimes \calZ}$ and $v_{1} = \overline{\lambda} u$. Since $(q\otimes \mathrm{id}_{\calZ})(v_{t}) = 1_{\calZ}$ for $t\in [0,1]$, then $(v_{t})_{t\in [0,1]}\subseteq \calU((A\otimes \calZ)^{\sim})$, which implies $1_{(A\otimes \calZ)^{\sim}}\sim_{h} \overline{\lambda} u\sim_{h} u$ in $(A\otimes \calZ)^{\sim}$. 
\end{proof} 

Since the approach in \cite[Theorem 4.8]{big_classification} inspires the results of this paper, we will use Theorem \ref{K1-inj} freely in later sections. 

\section{\texorpdfstring{$K_{1}$-surjectivity of $\calZ$-stable } \ C*-algebras}
We start by proving $K_{1}$-surjectivity of UHF-absorbing C*-algebras. 

\begin{lem} \label{UHF-surjective}
If $A$ is a unital C*-algebra and $n\geq 2$, then $A\otimes M_{n^{\infty}}$ is $K_{1}$-surjective. 
\end{lem}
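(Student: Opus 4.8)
The plan is to take a unitary $u \in \calU_k(A \otimes M_{n^\infty})$ representing an arbitrary class in $K_1(A \otimes M_{n^\infty})$ and show that it is equivalent, after adjusting by a scalar and passing through the homotopy relation, to a single unitary in $\calU_1(A \otimes M_{n^\infty})$, i.e.\ that every $K_1$-class is in the image of $\calU(A \otimes M_{n^\infty})/\calU^0(A \otimes M_{n^\infty})$. The key structural input is that $M_{n^\infty} \cong M_{n^\infty} \otimes M_{n^\infty}$ and, more usefully, that for each $m$ there is a unital embedding $M_k \hookrightarrow M_{k \cdot n^\infty} \cong M_{n^\infty}$ together with a system of matrix units witnessing it; concretely, $M_k \otimes M_{n^\infty}$ can be identified with a unital corner-amplified copy sitting inside $M_{n^\infty}$ in a way compatible with the identification of $M_k \otimes A \otimes M_{n^\infty}$ with $A \otimes M_{n^\infty}$. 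So the first step is to fix such an isomorphism $\Phi \colon M_k \otimes (A \otimes M_{n^\infty}) \xrightarrow{\ \cong\ } A \otimes M_{n^\infty}$ induced by a unital embedding $M_k \hookrightarrow M_{n^\infty}$, which on $K_1$ is the identity under the standard identification; then $\Phi(u) \in \calU(A \otimes M_{n^\infty})$ already lies at the level $\calU_1$.

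The second step is to check that $\Phi(u)$ represents the same $K_1$-class as $u$. This is standard: any two unital embeddings $M_k \hookrightarrow M_\ell$ into a matrix algebra are unitarily equivalent, hence homotopic inside the larger algebra, and the stabilization map $K_1(A \otimes M_{n^\infty}) \to K_1(M_k \otimes A \otimes M_{n^\infty})$ is the canonical isomorphism; composing with $\Phi_*$ gives back the identity on $K_1(A \otimes M_{n^\infty})$. Therefore $[\Phi(u)]_1 = [u]_1$ in $K_1(A \otimes M_{n^\infty})$, and since $\Phi(u) \in \calU(A \otimes M_{n^\infty})$, the class $[u]_1$ is in the image of the canonical map $\calU(A \otimes M_{n^\infty})/\calU^0(A \otimes M_{n^\infty}) \to K_1(A \otimes M_{n^\infty})$, which is exactly $K_1$-surjectivity.

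The one genuinely delicate point — and the step I expect to be the main obstacle — is making the identification $\Phi$ honest about basepoints and scalars in the non-unital-looking bookkeeping: $K_1$ is computed in the unitization, and one must be careful that the unital embedding $M_k \hookrightarrow M_{n^\infty}$ and the resulting identification $M_k(A \otimes M_{n^\infty})^{\sim} \cong (A \otimes M_{n^\infty})^{\sim}$ send $1_k$ to $1$ and intertwine the scalar maps $q$, so that the class of a unitary over the unitization is transported correctly. Once that is set up cleanly, the argument is essentially the observation that tensoring by $M_{n^\infty}$ already absorbs all the matrix amplifications needed to define $K_1$, so no passage to $M_\infty$ is required; the homotopy flexibility comes for free from connectedness of the unitary group of a full matrix algebra. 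I would present this by first recording the matrix-unit identification as a short paragraph, then the $K_1$-compatibility as a one-line consequence of uniqueness of unital embeddings up to homotopy, and finally assembling the conclusion.
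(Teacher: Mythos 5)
Your overall strategy is the same as the paper's: use $M_{k}\otimes M_{n^{\infty}}\cong M_{n^{\infty}}$ to transport a unitary in $\calU_{k}(A\otimes M_{n^{\infty}})$ down to a single unitary in $A\otimes M_{n^{\infty}}$. But the step you dismiss as ``standard'' is exactly where the argument breaks: the composite $A\otimes M_{n^{\infty}}\xrightarrow{\,x\mapsto e_{11}\otimes x\,} M_{k}\otimes A\otimes M_{n^{\infty}}\xrightarrow{\,\Phi\,} A\otimes M_{n^{\infty}}$ does \emph{not} induce the identity on $K_{1}$. It is a non-unital endomorphism whose image is the corner cut out by a projection of trace $1/k$, and on $K_{1}(A\otimes M_{n^{\infty}})\cong K_{1}(A)\otimes\mathbb{Z}[1/n]$ it acts as multiplication by $1/k$, which is a nontrivial automorphism in general. (Unitary equivalence of \emph{unital} embeddings $M_{k}\hookrightarrow M_{\ell}$ is irrelevant here, because the stabilization map $x\mapsto e_{11}\otimes x$ is not unital.) Concretely, for $A=C(S^{1})$ and $n=2$, the unitary $\mathrm{diag}(z\otimes 1,\,1)\in\calU_{2}(A\otimes M_{2^{\infty}})$ has class $1\in\mathbb{Z}[1/2]$, while $\Phi$ carries it to $z\otimes p+1\otimes(1-p)$ for a projection $p$ of trace $\tfrac{1}{2}$, whose class is $\tfrac{1}{2}$. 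So your asserted equality $[\Phi(u)]_{1}=[u]_{1}$ is false.

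The gap is repairable in one line, and the repair is precisely the paper's final step: from $[\Phi(u)]_{1}=\tfrac{1}{k}[u]_{1}$ you get $[u]_{1}=k[\Phi(u)]_{1}=[\Phi(u)^{k}]_{1}$, and $\Phi(u)^{k}$ is still a single unitary in $A\otimes M_{n^{\infty}}$, which is all that surjectivity requires (the paper writes this as $[u]_{1}=[v\otimes 1_{n^{k}}]_{1}=[v^{n^{k}}]_{1}$). Two smaller remarks: $M_{k}\otimes M_{n^{\infty}}\cong M_{n^{\infty}}$ requires $k$ to divide a power of $n$, so you must first enlarge the matrix size to some $n^{j}$, as the paper does; and the unitization and basepoint bookkeeping you single out as the main obstacle is actually a non-issue, since $A$ is unital here and everything takes place among honest unitaries of the unital algebra $M_{m}\otimes A\otimes M_{n^{\infty}}$.
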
  

\begin{proof} 
Take $B = A \otimes M_{n^{\infty}}$ and $[u]_{1}\in K_{1}(B)$, where $u\in \calU_{m}(B)$ for some $m\in\mathbb{N}$. By enlarging $m$ if necessary, we may assume that $m=n^{k}$ for some $k\in\mathbb{N}$. Since there exists an an isomorphism between $M_{n^{\infty}}$ and $M_{n^{\infty}}\otimes M_{n^{k}}$ that is approximately unitarily equivalent to the first-factor embedding, there exists a $\ast$-isomorphism $\theta: B \rightarrow B\otimes M_{n^{k}}$ such that $\theta$ is approximately unitarily equivalent to $\text{id}_{B} \otimes 1_{n^{k}}$.  

Take $v = \theta^{-1}(u)\in \calU(B)$, then there exists $w\in \calU_{n^{k}} (B)$ such that 
\begin{equation} 
\|u - w(v\otimes 1^{n^{k}}) w^{\ast}\| = \|\theta(v) - w(v\otimes 1^{n^{k}}) w^{\ast}\| < 1. 
\end{equation} 
Then $u$ is homotopic to $w(v\otimes 1^{n^{k}}) w^{\ast}$ in $\calU_{n^{k}}(B)$ and thus $[u]_{1}=[v^{n^{k}}]_{1}\in K_{1}(B)$. 
\end{proof} 

Lemma \ref{UHF-surjective} extends to non-unital UHF-absorbing C*-algebras by a direct application of the following lemma, which will also be applied to non-unital $\calZ$-stable C*-algebras later. The proof is standard and we include it for completeness. 

\begin{lem} \label{non-unital-K1} 
Let $A, B$ be C*-algebras with $A$ non-unital and $B$ unital. If $\tilde{A}\otimes B$ is $K_{1}$-surjective and $B$ is $K_{1}$-injective, then $A\otimes B$ is $K_{1}$-surjective, where $\otimes$ is the minimal tensor product. 
\end{lem}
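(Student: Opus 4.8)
The plan is to reduce a class in $K_1(A\otimes B)$ to a unitary in the unitized algebra, lift it through the surjectivity hypothesis on $\tilde A\otimes B$, and then correct the ``scalar part'' using the $K_1$-injectivity of $B$ — mirroring exactly the trick used in the proof of Theorem \ref{K1-inj}. Concretely, first I would fix a class $x\in K_1(A\otimes B)$. Since $K_1(A\otimes B)=K_1((A\otimes B)^\sim)$ and the minimal tensor product satisfies $(A\otimes B)^\sim \hookrightarrow \tilde A\otimes B$ with $K_1((A\otimes B)^\sim)\to K_1(\tilde A\otimes B)$ an isomorphism (the split exact sequence for $0\to A\otimes B\to \tilde A\otimes B\to B\to 0$ together with $0\to A\otimes B\to (A\otimes B)^\sim\to\C\to 0$), the image of $x$ in $K_1(\tilde A\otimes B)$ is represented, by the $K_1$-surjectivity hypothesis on $\tilde A\otimes B$, by a single unitary $u\in\calU(\tilde A\otimes B)$ rather than a matrix amplification.

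Next I would extract the obstruction to $u$ lying in $(A\otimes B)^\sim$: set $\mu = (q_A\otimes \mathrm{id}_B)(u)\in\calU(B)$, where $q_A\colon\tilde A\to\C$ is the canonical character, so that $q_A\otimes\mathrm{id}_B\colon \tilde A\otimes B\to B$. The element $v\defeq (1_{\tilde A}\otimes\mu)^{\ast}u$ then satisfies $(q_A\otimes\mathrm{id}_B)(v)=1_B$, hence $v\in\calU((A\otimes B)^\sim)$, and $[v]_1 = [u]_1 - [1_{\tilde A}\otimes\mu]_1$ in $K_1(\tilde A\otimes B)$. Now $1_{\tilde A}\otimes\mu$ lies in the image of the unital inclusion $B=\C 1_{\tilde A}\otimes B\hookrightarrow \tilde A\otimes B$, which factors the map $B\to\tilde A\otimes B\to B$ as the identity; consequently, identifying $K_1(B)$ with its image, the question is whether $[1_{\tilde A}\otimes\mu]_1$ vanishes. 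It need not vanish in general, but since $B$ is $K_1$-injective and — crucially — we are free to \emph{adjust} $u$ within its homotopy class, I would instead argue directly on paths: it suffices to produce a path in $\calU((A\otimes B)^\sim)$ from $1$ to $v$ whenever $[v]_1$ maps to $0$ in the right place.

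The cleanest route, and the one I would carry out, is the following. The class $x\in K_1(A\otimes B)\cong K_1((A\otimes B)^\sim)$ is represented by $[v]_1$, because $[v]_1$ and $x$ have the same image under the isomorphism $K_1((A\otimes B)^\sim)\xrightarrow{\sim} K_1(\tilde A\otimes B)$: indeed $[v]_1 = [u]_1 - [1_{\tilde A}\otimes\mu]_1$ in $K_1(\tilde A\otimes B)$, and $[1_{\tilde A}\otimes\mu]_1$ lies in the image of $K_1(\C 1_{\tilde A})=0$... more carefully, $1_{\tilde A}\otimes\mu$ is homotopic in $\calU(\tilde A\otimes B)$ to $1$ because $\mu\in\calU(B)$ and — here is where $K_1$-injectivity of $B$ enters — if $[\mu]_1=0$ in $K_1(B)$ then $\mu\sim_h 1_B$ in $\calU(B)$, hence $1_{\tilde A}\otimes\mu\sim_h 1$ in $\tilde A\otimes B$ through unitaries of the form $1_{\tilde A}\otimes(\cdot)$, which moreover all lie in $(A\otimes B)^\sim$. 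To see $[\mu]_1=0$: $\mu = (q_A\otimes\mathrm{id}_B)(u)$ and $q_A\otimes\mathrm{id}_B$ factors through the \emph{non-unital} part trivially — precisely, $u$ represents a class pulled back from $K_1(A\otimes B)$, which lies in the kernel of $K_1(\tilde A\otimes B)\to K_1(B)$, so $[\mu]_1 = (q_A\otimes\mathrm{id}_B)_*[u]_1 = 0$. Thus $\mu\sim_h 1_B$, giving $v\sim_h u$ in $\calU(\tilde A\otimes B)$ through a path staying in $\calU((A\otimes B)^\sim)$ after left-multiplying by $1_{\tilde A}\otimes\mu_t^\ast$, and in particular $v\sim_h$ (a unitary representing $x$) within $(A\otimes B)^\sim$; since $v$ itself lies in $\calU((A\otimes B)^\sim)$ and $[v]_1=x$, this exhibits $x$ as coming from $\calU((A\otimes B)^\sim)/\calU^0$, i.e. $A\otimes B$ is $K_1$-surjective. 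The main obstacle is purely bookkeeping: keeping straight the three short exact sequences (for $A\otimes B\subset\tilde A\otimes B$, for $A\otimes B\subset(A\otimes B)^\sim$, and the comparison map between them) and verifying that the path witnessing $\mu\sim_h 1_B$ can be transported into $(A\otimes B)^\sim$ — which works because the correction unitaries are scalars-in-$\tilde A$ times elements of $B$, hence have trivial image under $q_A\otimes\mathrm{id}_B$ only after we divide out, exactly as in the proof of Theorem \ref{K1-inj}.
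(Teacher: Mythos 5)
Your argument is correct and follows the same overall strategy as the paper: use the split exact sequence $0\to A\otimes B\to\tilde A\otimes B\to B\to 0$ to view $K_1(A\otimes B)$ inside $K_1(\tilde A\otimes B)$, represent the class $x$ by a single unitary $u\in\calU(\tilde A\otimes B)$ via the surjectivity hypothesis, and then multiply by a correcting unitary of trivial $K_1$-class so that the product has image $1_B$ under $q\otimes\mathrm{id}_B$ and hence lies in $\calU((A\otimes B)^\sim)$. The one genuine difference is the correction step. The paper uses $K_1$-injectivity of $B$ to conclude $(q\otimes\mathrm{id}_B)(u)\in\calU^0(B)$, lifts that null-homotopy to some $v_1\in\calU^0(\tilde A\otimes B)$ with the same image under $q\otimes\mathrm{id}_B$, and multiplies by $v_1^*$. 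You instead multiply by $(1_{\tilde A}\otimes\mu)^*$ with $\mu=(q\otimes\mathrm{id}_B)(u)$, using the algebraic splitting $b\mapsto 1_{\tilde A}\otimes b$; since $[\mu]_1=(q\otimes\mathrm{id}_B)_*(x)=0$ by functoriality and exactness, one gets $[1_{\tilde A}\otimes\mu]_1=0$ directly, so your route never actually needs the $K_1$-injectivity of $B$ --- your invocation of it (to get $\mu\sim_h 1_B$) is redundant. That is a small but real gain: it shows the hypothesis on $B$ can be dropped from this lemma. Two side remarks in your write-up are inaccurate but harmless: the map $K_1((A\otimes B)^\sim)\to K_1(\tilde A\otimes B)$ is injective with image $\ker K_1(q\otimes\mathrm{id}_B)$, not an isomorphism; and the path $t\mapsto(1_{\tilde A}\otimes\mu_t)^*u$ does \emph{not} stay in $\calU((A\otimes B)^\sim)$, since its image under $q\otimes\mathrm{id}_B$ is $\mu_t^*\mu$, which need not be scalar. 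Neither claim is needed: all that surjectivity requires is the single unitary $v=(1_{\tilde A}\otimes\mu)^*u\in\calU((A\otimes B)^\sim)$ with $[v]_1=x$, which you have.
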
 

\begin{proof} 
By \cite[Lemma 1]{exact-short} for example, we have the following split short exact sequence, 
\begin{equation} \label{unit-tensor} 
\begin{tikzcd}
0 \arrow[r] & A\otimes B \arrow[r, "\iota\otimes \mathrm{id}_{B}"] & \tilde{A}\otimes B \arrow[r, "q \otimes \mathrm{id}_{B}"] & B \arrow[r] & 0, 
\end{tikzcd}
\end{equation} 
which induces a split short exact sequence for $K_{1}$-groups by  split-exactness of $K_{1}$, see \cite[Proposition 8.2.5]{Rordam-K}. Then $(\iota\otimes \mathrm{id}_{B})^{\sim}$ is injective and for any $u\in \calU(\tilde{A}\otimes B)$, it is in $\calU((A\otimes B)^{\sim})$ if and only if $(q\otimes \mathrm{id_{B}})(u)\in \mathbb{C} 1_{B}$. 

Take any $x\in K_{1}(A \otimes B) \subseteq K_{1}(\tilde{A}\otimes B)$, then there exists $v_{0}\in \calU(\tilde{A}\otimes B)$ such that $x=[v_{0}]_{1}$ since $\tilde{A}\otimes B$ is assumed to be $K_{1}$-surjective. Then 
\begin{equation} 
[(q\otimes \mathrm{id}_{B})(v_{0})]_{1} = K_{1}(q\otimes \mathrm{id}_{B})(x) = 0 \in K_{1}(B), 
\end{equation} 
which implies $(q\otimes \mathrm{id}_{B})(v_{0})\in \calU^{0}(B)$ by $K_{1}$-injectivity of $B$. Therefore there exists $v_{1}\in \calU^{0}(\tilde{A}\otimes B)$ with $(q\otimes \mathrm{id}_{B})(v_{0}) = (q\otimes \mathrm{id}_{B})(v_{1})$. We take $u = v_{1}^{*}v_{0} \in \calU(\tilde{A}\otimes B)$ so that $[u]_{1} = x$ and $(q\otimes \mathrm{id}_{B})(u) = 1_{B}$, which implies that $u\in \calU((A\otimes B)^{\sim})$. 
\end{proof} 

\begin{cor} \label{UHF-non-unital} 
If $A$ is a C*-algebra and $n\geq 2$, then $A\otimes M_{n^{\infty}}$ is $K_{1}$-surjective. 
\end{cor}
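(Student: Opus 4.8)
The plan is straightforward: \Cref{UHF-non-unital} should follow by combining \Cref{UHF-surjective}, \Cref{non-unital-K1}, and \Cref{K1-inj}, after reducing to the case where $A$ is non-unital. First I would split into two cases. If $A$ is already unital, then $A \otimes M_{n^\infty}$ is unital and $K_1$-surjective directly by \Cref{UHF-surjective}, so there is nothing to do. If $A$ is non-unital, I want to apply \Cref{non-unital-K1} with the unital algebra $B = M_{n^\infty}$, which requires checking two hypotheses: that $\tilde A \otimes M_{n^\infty}$ is $K_1$-surjective, and that $M_{n^\infty}$ is $K_1$-injective.

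For the first hypothesis, since $\tilde A$ is unital, \Cref{UHF-surjective} applies directly to give that $\tilde A \otimes M_{n^\infty}$ is $K_1$-surjective. For the second hypothesis, I need $M_{n^\infty}$ to be $K_1$-injective. The cleanest route is to observe that $M_{n^\infty}$ is $\calZ$-stable (UHF algebras absorb $\calZ$, since $M_{n^\infty} \cong M_{n^\infty} \otimes M_{n^\infty}$ absorbs any strongly self-absorbing algebra, or more directly $M_{n^\infty} \otimes \calZ \cong M_{n^\infty}$ by uniqueness from classification / the original Jiang--Su result), so \Cref{K1-inj} gives that $M_{n^\infty} \cong \C \otimes M_{n^\infty} \otimes \calZ$-type reasoning applies; alternatively, and more elementarily, $K_1(M_{n^\infty}) = 0$ together with the fact that $\calU(M_{n^\infty})$ is connected (it has real rank zero and stable rank one, or one simply notes every unitary in a UHF algebra is a norm-limit of unitaries with finite spectrum, hence connected to the identity) shows $M_{n^\infty}$ is $K_1$-injective trivially. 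I would use the elementary observation to keep the proof self-contained: $\calU(M_{n^\infty}) = \calU^0(M_{n^\infty})$ because $M_{n^\infty}$ has stable rank one (indeed real rank zero), so the map $\calU(M_{n^\infty})/\calU^0(M_{n^\infty}) \to K_1(M_{n^\infty})$ has trivial domain and is in particular injective.

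With both hypotheses verified, \Cref{non-unital-K1} yields that $A \otimes M_{n^\infty}$ is $K_1$-surjective, completing the non-unital case and hence the corollary. I do not anticipate a genuine obstacle here; the only point requiring a moment's care is the $K_1$-injectivity of $M_{n^\infty}$, and even that is classical. One should also be slightly careful that the minimal tensor product $A \otimes M_{n^\infty}$ agrees with the algebraic/maximal one since $M_{n^\infty}$ is nuclear, so the hypotheses of \Cref{non-unital-K1} — which is stated for the minimal tensor product — are met with no ambiguity, and the split exact sequence \eqref{unit-tensor} is available. Thus the write-up is essentially a two-line deduction once the unital/non-unital split is made explicit.

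\begin{proof}
If $A$ is unital, then $A \otimes M_{n^\infty}$ is unital and the claim is \Cref{UHF-surjective}. Suppose now that $A$ is non-unital, and apply \Cref{non-unital-K1} with $B = M_{n^\infty}$. Since $\tilde A$ is unital, $\tilde A \otimes M_{n^\infty}$ is $K_1$-surjective by \Cref{UHF-surjective}. Moreover $M_{n^\infty}$ has real rank zero and hence stable rank one, so $\calU(M_{n^\infty}) = \calU^0(M_{n^\infty})$; thus the group $\calU(M_{n^\infty})/\calU^0(M_{n^\infty})$ is trivial and $M_{n^\infty}$ is (vacuously) $K_1$-injective. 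As $M_{n^\infty}$ is nuclear, the minimal tensor product $A \otimes M_{n^\infty}$ coincides with the maximal one, so the hypotheses of \Cref{non-unital-K1} are satisfied and we conclude that $A \otimes M_{n^\infty}$ is $K_1$-surjective.
\end{proof}
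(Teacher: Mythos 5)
Your proof is correct and follows essentially the same route as the paper: the unital case is Lemma \ref{UHF-surjective}, and the non-unital case is Lemma \ref{non-unital-K1} applied with $B = M_{n^{\infty}}$, using that $\tilde{A}\otimes M_{n^{\infty}}$ is $K_{1}$-surjective and $M_{n^{\infty}}$ is unital and $K_{1}$-injective. One small imprecision: real rank zero does not imply stable rank one in general, so it is cleaner to justify $\calU(M_{n^{\infty}})=\calU^{0}(M_{n^{\infty}})$ by the finite-dimensional approximation argument you mention in your sketch (every unitary in a UHF algebra is a norm limit of unitaries lying in finite-dimensional subalgebras, whose unitary groups are connected).
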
 

\begin{proof} 
In the unital case, the result follows from Lemma \ref{UHF-surjective}. If $A$ is non-unital, since $\tilde{A}\otimes M_{n^{\infty}}$ is $K_{1}$-surjective and $M_{n^{\infty}}$ is unital and $K_{1}$-injective, by Lemma \ref{non-unital-K1}, we get $K_{1}$-surjectivity of $A\otimes M_{n^{\infty}}$. 
\end{proof} 

Now we prove the following main result of the section. We will denote the \textit{suspension} of a C*-algebra $A$ by $SA = C_{0}((0,1))\otimes A$.  

\begin{lem} \label{K1-surjective}
If $A$ is a unital C*-algebra, then $A\otimes \calZ_{2^{\infty}, 3^{\infty}}$ is $K_{1}$-surjective.  
\end{lem}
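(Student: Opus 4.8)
The plan is to reduce $K_1$-surjectivity of $A \otimes \calZ_{2^\infty,3^\infty}$ to the already-established $K_1$-surjectivity of UHF-stabilizations (Corollary~\ref{UHF-non-unital}) by exploiting the mapping-cone / pullback structure of $\calZ_{2^\infty,3^\infty}$. Recall that $\calZ_{2^\infty,3^\infty}$ fits into a pullback square: evaluation at the two endpoints gives a surjection onto $(M_{2^\infty}\otimes 1) \oplus (1\otimes M_{3^\infty}) \cong M_{2^\infty}\oplus M_{3^\infty}$, with kernel the functions vanishing at both endpoints, i.e.\ $C_0((0,1))\otimes M_{2^\infty}\otimes M_{3^\infty} = S(M_{6^\infty})$. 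Tensoring by $A$ (which preserves exactness for the minimal tensor product, and in fact $C([0,1],\cdot)$ and $C_0((0,1),\cdot)$ commute with $\otimes A$), we obtain a short exact sequence
\begin{equation*}
0 \to SA\otimes M_{6^\infty} \to A\otimes \calZ_{2^\infty,3^\infty} \to (A\otimes M_{2^\infty}) \oplus (A\otimes M_{3^\infty}) \to 0.
\end{equation*}

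The key point is that both the ideal $SA\otimes M_{6^\infty}$ and the quotient $(A\otimes M_{2^\infty})\oplus(A\otimes M_{3^\infty})$ have good non-stable $K$-theory: the quotient is $K_1$-surjective by Corollary~\ref{UHF-non-unital} (a finite direct sum of $K_1$-surjective algebras is $K_1$-surjective), and for the ideal one uses that $K_1(SA\otimes M_{6^\infty}) \cong K_0(A\otimes M_{6^\infty})$ together with a Bott-type argument: every element of $K_1$ of a suspension is represented by a loop of unitaries, and since $M_{6^\infty}$-stabilization lets us absorb matrix amplifications (as in the proof of Lemma~\ref{UHF-surjective}), such a class is realized by a single unitary in the unitization. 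I would first nail down that $SA\otimes M_{6^\infty}$ is $K_1$-surjective, either directly by this loop argument or by invoking Lemma~\ref{non-unital-K1} after checking the unital suspension $\widetilde{SM_{6^\infty}}^{\,}\otimes A$-type statement; a clean route is to observe $S(M_{6^\infty})\otimes A$ is stable-ish enough that its $K_1$ is computed without amplification.

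The main step is then a diagram chase with the six-term exact sequence in $K$-theory together with the exact sequence of unitary groups. Given $x\in K_1(A\otimes\calZ_{2^\infty,3^\infty})$, push it to the quotient; by $K_1$-surjectivity there lift it to a genuine unitary $u$ in the unitization of the quotient, and—crucially—lift $u$ itself (not just its class) to a unitary $\tilde u$ in a matrix algebra over $(A\otimes\calZ_{2^\infty,3^\infty})^\sim$; this is possible because the endpoint evaluation maps are surjective with connected-enough fibers, so unitaries lift (one builds the lift pathwise on $[0,1]$, interpolating between the two endpoint values through the UHF fiber $M_{6^\infty}$, using that $\calU^0$ of a UHF algebra is the whole unitary group). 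The difference $x - [\tilde u]_1$ then lies in the image of $K_1(SA\otimes M_{6^\infty})$, which is represented by an honest unitary by the ideal case; multiplying the two representatives gives a single unitary representing $x$.

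The hard part will be the \emph{lifting of unitaries along the endpoint-evaluation map}: a $K$-theory class lifts trivially by exactness, but to conclude $K_1$-surjectivity we need an actual unitary in the algebra, and the obstruction to lifting a unitary from $M_{2^\infty}\oplus M_{3^\infty}$ to $\calZ_{2^\infty,3^\infty}$ is exactly that the two endpoint unitaries must be connected by a path in $\calU(M_{2^\infty}\otimes M_{3^\infty})$ respecting the embeddings—which works precisely because $\calU(M_{6^\infty})$ is connected, so $K_1$-injectivity of the UHF fibers (trivially true) is what makes the pathwise construction go through. Once that lifting lemma is in hand, the rest is the routine diagram chase above.
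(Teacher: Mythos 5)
Your overall architecture is exactly the paper's: the same short exact sequence $0 \to SA\otimes M_{6^\infty} \to A\otimes \calZ_{2^\infty,3^\infty} \to (A\otimes M_{2^\infty})\oplus(A\otimes M_{3^\infty}) \to 0$, $K_1$-surjectivity of the two UHF-absorbing endpoint algebras and of the ideal via Corollary~\ref{UHF-non-unital}, and a final half-exactness chase to correct the lift by a unitary from the unitization of the ideal. That part is fine. But there is a genuine gap in the step you yourself flag as the hard one: lifting the unitary $(w_0,w_1)$ from the quotient to a unitary in $A\otimes\calZ_{2^\infty,3^\infty}$. Such a lift is precisely a continuous path in $\calU(A\otimes M_{6^\infty})$ from $w_0$ to $w_1$, and you justify its existence by the connectedness of $\calU(M_{6^\infty})$ and ``$K_1$-injectivity of the UHF fibers (trivially true).'' The relevant algebra is not the fiber $M_{6^\infty}$ but $A\otimes M_{6^\infty}$, whose unitary group is in general \emph{not} connected (e.g.\ for $A=C(\mathbb{T})$ one has $\pi_0(\calU(A\otimes M_{6^\infty}))\cong \mathbb{Z}[1/6]\neq 0$). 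So an arbitrary unitary in the quotient does \emph{not} lift, and your pathwise interpolation argument, as justified, proves a false statement.

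Two non-trivial ingredients are needed to close this. First, you must arrange that $[w_0]_1=[w_1]_1$ in $K_1(A\otimes M_{6^\infty})$; this is not automatic for arbitrary representatives of the endpoint classes, but follows because any unitary $u\in\calU_m(A\otimes\calZ_{2^\infty,3^\infty})$ representing $x$ is itself a continuous path in $\calU_m(A\otimes M_{6^\infty})$ from $u(0)$ to $u(1)$, so the two endpoint classes agree there. (This is why the paper starts from a concrete unitary representative $u$ rather than an abstract class $x$.) Second, to upgrade equality of $K_1$-classes to an actual homotopy $w_0\sim_h w_1$ at the $1\times 1$ level you need $K_1$-injectivity of the unital UHF-absorbing algebra $A\otimes M_{6^\infty}$, which is \cite[Lemma 4.9]{big_classification} --- a genuine input about UHF-absorption, not a triviality about the fiber. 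With those two points inserted, your lift $w$ exists and the remaining diagram chase goes through exactly as you describe.
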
  

\begin{proof} 
We view $A\otimes \calZ_{2^{\infty}, 3^{\infty}}$ as a C*-subalgebra of $C([0,1], A\otimes M_{6^{\infty}})$, where 
\begin{equation} 
A\otimes \calZ_{2^{\infty}, 3^{\infty}} = \{f\in C([0,1], A\otimes M_{6^{\infty}}): f(0) \in A\otimes M_{2^{\infty}}, f(1) \in A\otimes M_{3^{\infty}}\}. 
\end{equation} 
Then we have the following short exact sequence, 
\begin{equation}
0\rightarrow A\otimes SM_{6^{\infty}}\xrightarrow{\iota} A\otimes \calZ_{2^{\infty}, 3^{\infty}} \xrightarrow{q} (A\otimes M_{2^{\infty}})\oplus (A\otimes M_{3^{\infty}})\rightarrow 0,
\end{equation} 
where $q(f)=(f(0), f(1))$ for any $f\in A\otimes \calZ_{2^{\infty}, 3^{\infty}}$. 

Take $u\in \calU_{m}(A\otimes \calZ_{2^{\infty}, 3^{\infty}})$ for some $m\in \mathbb{N}$ so that $u(0)\in \calU_{m}(A\otimes M_{2^{\infty}})$ and $u(1)\in \calU_{m}(A\otimes M_{3^{\infty}})$. By Theorem \ref{UHF-surjective}, both $A\otimes M_{2^{\infty}}$ and $A\otimes M_{3^{\infty}}$ are $K_{1}$-surjective. Then there exists $w_{0}\in \calU(A\otimes M_{2^{\infty}})$ and $w_{1}\in \calU(A\otimes M_{3^{\infty}})$ such that $[u(0)]_{1}=[w_{0}]_{1}\in K_{1}(A\otimes M_{2^{\infty}})$ and $[u(1)]_{1}=[w_{1}]_{1}\in K_{1}(A\otimes M_{3^{\infty}})$. 

Since $u$ is a continuous path in $\calU_{m}(A\otimes M_{6^{\infty}})$ connecting $u(0)$ and $u(1)$, then 
\begin{equation} 
[w_{0}]_{1} = [u(0)]_{1} = [u(1)]_{1} = [w_{1}]_{1} \in K_{1}(A\otimes M_{6^{\infty}}), 
\end{equation} 
where we regard $A\otimes M_{2^{\infty}}$ and $A\otimes M_{3^{\infty}}$ as C*-subalgebras of $A\otimes M_{6^{\infty}}$. Since $A\otimes M_{6^{\infty}}$ is unital and UHF-absorbing, then it is $K_{1}$-injective by \cite[Lemma 4.9]{big_classification}. Thus there exists a unitary path $w\in C([0,1])\otimes A\otimes M_{6^{\infty}}$ with $w(0)=w_{0}\in A\otimes M_{2^{\infty}}$ and $w(1)=w_{1}\in A\otimes M_{3^{\infty}}$. Thus $w\in \calU(A\otimes \calZ_{2^{\infty}, 3^{\infty}})$ with $q(w)=(w_{0},w_{1})$ and we have 
\begin{equation} 
[q(u)]_{1} = [q(w)]_{1}\in K_{1}(A\otimes M_{2^{\infty}})\oplus K_{1}(A\otimes M_{3^{\infty}}). 
\end{equation} 
Then $[u(w\oplus 1_{m-1})^{*}]_{1}\in \ker(K_{1}(q))= \Ima(K_{1}(\iota))$ by half-exactness of $K_{1}$, see \cite[Proposition 8.2.4]{Rordam-K}. Since $A\otimes SM_{6^{\infty}}$ is $K_{1}$-surjective by Corollary \ref{UHF-non-unital}, there exists $z\in \calU((A\otimes SM_{6^{\infty}})^{\sim})\subseteq A\otimes \calZ_{2^{\infty}, 3^{\infty}}$ such that $[u(w\oplus 1_{m-1})^{*}]_{1}=K_{1}(\iota)([z]_{1})$. Thus $[u]_{1}=[zw]_{1}\in K_{1}(A\otimes \calZ_{2^{\infty}, 3^{\infty}})$. 
\end{proof} 

For infinite coprime supernatural numbers $\mathfrak{p}, \mathfrak{q}$, then $\calZ_{\mathfrak{p}, \mathfrak{q}}$ is $\calZ$-stable by \cite[Theorem 4.6]{fiber-alg}, since fibers of $\calZ_{\mathfrak{p}, \mathfrak{q}}$ are $\calZ$-stable. Then a direct application of Jiang’s result shows that $A\otimes \calZ_{2^{\infty}, 3^{\infty}}$ is $K_{1}$-surjective. Whereas in our approach, Lemma \ref{K1-surjective} is the last ingredient needed to reprove $K_{1}$-surjectivity of $\calZ$-stable C*-algebras. 

\begin{thm} \label{K1-sur} 
(\cite[Theorem 2]{Jiang}) If $A$ is a C*-algebra, then $A\otimes \calZ$ is $K_{1}$-surjective. 
\end{thm}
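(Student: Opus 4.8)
The plan is to reduce the statement about $A\otimes\calZ$ to the already-established $K_1$-surjectivity of $A\otimes\calZ_{2^\infty,3^\infty}$ (Lemma \ref{K1-surjective}) via the R\o rdam--Winter picture of $\calZ$ from Theorem \ref{new-pic}. First I would dispose of the non-unital case immediately: if $A$ is non-unital, then $\tilde A\otimes\calZ$ is $K_1$-surjective by the unital case, $\calZ$ is unital and is $K_1$-injective (it is $\calZ$-stable, so Theorem \ref{K1-inj} applies with coefficient algebra $\C$), and hence Lemma \ref{non-unital-K1} gives $K_1$-surjectivity of $A\otimes\calZ$. So it suffices to treat $A$ unital.

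For $A$ unital, tensor the inductive system of Theorem \ref{new-pic} with $A$: since $\calZ\cong\ulalim(\calZ_{2^\infty,3^\infty},\varphi)$, and tensoring with the fixed C*-algebra $A$ (minimal tensor product) is continuous with respect to inductive limits, we get
\begin{equation*}
A\otimes\calZ \;\cong\; \ulalim\big(A\otimes\calZ_{2^\infty,3^\infty}\xrightarrow{\ \mathrm{id}_A\otimes\varphi\ }A\otimes\calZ_{2^\infty,3^\infty}\xrightarrow{\ \mathrm{id}_A\otimes\varphi\ }\cdots\big).
\end{equation*}
Now take $[u]_1\in K_1(A\otimes\calZ)$ with $u\in\calU_m\big((A\otimes\calZ)^\sim\big)$. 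By continuity of $K_1$ under inductive limits (\cite[Theorem 6.3.2]{Rordam-K}), and since $A\otimes\calZ_{2^\infty,3^\infty}$ is unital (so unitization is harmless at the level of the system), there is some stage $k$ and a class $[v]_1\in K_1(A\otimes\calZ_{2^\infty,3^\infty})$ mapping to $[u]_1$ under the canonical map $\mu_k$ into the limit. By Lemma \ref{K1-surjective}, $A\otimes\calZ_{2^\infty,3^\infty}$ is $K_1$-surjective, so we may choose $v$ to be a genuine unitary in some matrix amplification $\calU_m(A\otimes\calZ_{2^\infty,3^\infty})$ with $[v]_1$ the chosen class. The image of $v$ under $(\mu_k)_\ast$ at the level of unitaries is a unitary $w\in\calU_m(A\otimes\calZ)$ with $[w]_1=\mu_{k\ast}[v]_1=[u]_1\in K_1(A\otimes\calZ)$, which exhibits $[u]_1$ as coming from an honest unitary, i.e.\ $A\otimes\calZ$ is $K_1$-surjective.

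The one point requiring care — and the main obstacle — is the bookkeeping of unitizations and matrix amplifications through the inductive limit: strictly speaking $K_1$-surjectivity is a statement about $(A\otimes\calZ)^\sim$, whereas the system involves the unital algebras $A\otimes\calZ_{2^\infty,3^\infty}$. This is routine but must be stated cleanly; one way is to pass to the unitized system $(A\otimes\calZ_{2^\infty,3^\infty})^\sim\to(A\otimes\calZ_{2^\infty,3^\infty})^\sim\to\cdots$ (splicing in $\mathbb C$ in the obvious way so the connecting maps are the unitized $\mathrm{id}_A\otimes\varphi$), whose limit is $(A\otimes\calZ)^\sim$, and to note that each $(A\otimes\calZ_{2^\infty,3^\infty})^\sim$ is still $K_1$-surjective because surjectivity of $\calU/\calU^0\to K_1$ for a unital algebra is inherited by its unitization $B^\sim\cong B\oplus\C$ — or simply invoke Corollary \ref{UHF-non-unital}-style reasoning directly. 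With that in hand the argument is a two-line application of continuity of $K_1$ under sequential inductive limits together with Lemma \ref{K1-surjective}, and no explicit homomorphisms need be constructed, in contrast to Jiang's original approach.
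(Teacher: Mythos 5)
Your proof is correct and takes essentially the same route as the paper: the unital case is precisely the statement that $K_1$-surjectivity passes to (unital) inductive limits, which you spell out via continuity of $K_1$ applied to the R{\o}rdam--Winter system tensored with $A$ together with Lemma \ref{K1-surjective}, and the non-unital case is the same application of Lemma \ref{non-unital-K1} with $B=\calZ$ and Theorem \ref{K1-inj}. The only slip is the phrase that $v$ may be taken ``in some matrix amplification $\calU_m$'': for $K_1$-surjectivity you need $v$ (hence $w=\mu_k(v)$) at the $1\times 1$ level, which is exactly what Lemma \ref{K1-surjective} provides, so the argument stands as written once that is fixed.
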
  

\begin{proof} 
In the unital case, this follows from Theorem \ref{new-pic}, Theorem \ref{K1-surjective} and the fact that $K_{1}$-surjectivity is preserved by inductive limits. For non-unital $A$, since $\calZ$ is $K_{1}$-injective by Theorem \ref{K1-inj}, then $A\otimes \calZ$ is $K_{1}$-surjective by Lemma \ref{non-unital-K1}. 
\end{proof} 

In fact, all we need in the proof is that $\calZ$ is an inductive limit of generalized dimension drop algebras, regardless of the specific connecting maps. 

\section{Homotopy groups for \texorpdfstring{$\calZ$-stable } \ C*-algebras} 
In this section, we prove $K$-stability for both unital and non-unital $\calZ$-stable C*-algebras. In order to cover the non-unital case, we need to extend the definition for unitary groups. For any C*-algebra $A$, we denote the minimal proper unitization of $A$ by $\tilde{A}$, where $\tilde{A} = A\oplus\mathbb{C}$ if $A$ is unital. The canonical maps associated to $\tilde{A}$ are $\iota: A\rightarrow \tilde{A}$ and $q: \tilde{A}\rightarrow \mathbb{C}$. Then the generalized unitary group $\calV(A)=\{u\in \calU(\tilde{A}):q(u)=1\}$ is a topological group and $\calV(A) \cong \calU(A)$ as topological groups when $A$ is unital. We use the notation $\calV_{m}(A)$ for $\calV(M_{m}\otimes A)$, where $m\geq 1$. 

In the unital case, we define $\calU_{\infty}(A)$ to be the inductive limit of groups $\calU_{m}(A)$ with metric preserving connecting maps. Thus $\calU_{\infty}(A)$ is equipped with a well-defined metric from finite stages. For a general C*-algebra, take $\calV_{\infty}(A) = \{u\in \calU_{\infty}(\tilde{A}): q(u) = 1\}$, where $q$ is the matrix amplification of $q: \tilde{A}\rightarrow \mathbb{C}$. Then there are embeddings $i_{m, m'}: \calV_{m}(A)\rightarrow \calV_{m'}(A)$ for $1\leq m\leq m'\leq \infty$ and it is standard to check that maps $\pi_{n}(\calV_{m}(A))\rightarrow  \pi_{n}(\calU_{m}(\tilde{A}))$ induced by inclusions $\iota_{m}: \calV_{m}(A)\rightarrow \calU_{m}(\tilde{A})$ are isomorphisms for $n\geq 1$ and $m\in \mathbb{N}\cup \{\infty\}$. 

By Bott periodicity of $K$-theory (see \cite[Proposition 11.4.1]{Rordam-K} for example), for any $n\geq 0$, the canonical maps induce the following isomorphisms, 
\begin{equation} \label{bott} 
\pi_{n}(\calV_{\infty}(A))\cong 
\begin{cases} 
      K_{0}(A) & \text{if $n$ is odd, } \\
      K_{1}(A) & \text{if $n$ is even. }
\end{cases}
\end{equation} 

The definition for $K$-stability is given by Thomsen using quasi-unitaries in \cite{Thomsen}. Since $\calV(A)$ is homeomorphic to the topological group of quasi-unitaries in $A$ defined in \cite{Thomsen}, the following definition for $K$-stability indeed coincides with Thomsen's. 

\begin{defn} \label{K-stable} 
(\cite[Definition 3.1]{Thomsen}) Let $A$ be a C*-algebra, then it is called \textit{$K$-stable} if for any $m\geq 1$ and $n\geq 0$, the induced maps $\pi_{n}(i_{m, m+1})$ are isomorphisms. 
\end{defn} 

Now we prove the main theorem of the section. 

\begin{thm} \label{main} 
(\cite[Theorem 2.8 and Theorem 3]{Jiang}) Let $A$ be a $\calZ$-stable C*-algebra, then $A$ is $K$-stable. Moreover, for any integer $n\geq 0$, the following isomorphisms are induced by the canonical embedding $i_{1, \infty}: \calV(A)\rightarrow \calV_{\infty}(A)$, 
\begin{equation} \label{final}
\pi_{n}(\calV(A))\cong \begin{cases} 
      K_{0}(A) & \text{if $n$ is odd, } \\
      K_{1}(A) & \text{if $n$ is even. }
   \end{cases}
\end{equation} 
\end{thm}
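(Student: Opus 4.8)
The plan is to derive the whole statement from $K_1$-injectivity (Theorem~\ref{K1-inj}) and $K_1$-surjectivity (Theorem~\ref{K1-sur}) of $\calZ$-stable C*-algebras, by way of two standard reductions, stabilisation and suspension. First observe that if $A$ is $\calZ$-stable then so is every $M_m\otimes A$ and every iterated suspension $S^nA$, indeed every $M_m\otimes S^nA$, since $A\cong A\otimes\calZ$ forces $C\otimes A\cong C\otimes A\otimes\calZ$ for every $C$. Now $\calV_m(A)=\calV(M_m\otimes A)$, while $\calV_\infty(M_m\otimes A)$ is canonically identified with $\calV_\infty(A)$ because the stages $\calV_{km}(A)$ are cofinal among the $\calV_j(A)$; under this identification the embedding $i_{m,\infty}\colon\calV_m(A)\to\calV_\infty(A)$ becomes $i_{1,\infty}\colon\calV(M_m\otimes A)\to\calV_\infty(M_m\otimes A)$. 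Hence, once one knows that $i_{1,\infty}$ induces isomorphisms on all homotopy groups for \emph{every} $\calZ$-stable C*-algebra, it follows for $\calZ$-stable $A$ that $\pi_n(i_{m,\infty})$ and $\pi_n(i_{m+1,\infty})$ are isomorphisms, so factoring $i_{m,\infty}=i_{m+1,\infty}\circ i_{m,m+1}$ gives that $\pi_n(i_{m,m+1})$ is an isomorphism, which is $K$-stability in the sense of Definition~\ref{K-stable}; taking $m=1$ and composing with \eqref{bott} yields \eqref{final}. The theorem thus reduces to the claim: for every $\calZ$-stable C*-algebra $B$ and every $n\geq0$, the map $\pi_n(i_{1,\infty})\colon\pi_n(\calV(B))\to\pi_n(\calV_\infty(B))$ is an isomorphism.

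Next I would reduce to $n=0$ via suspension. The group $\calV(SB)$ is the based loop group $\Omega\calV(B)$: a unitary of $\widetilde{SB}$ with $q=1$ unwinds to a continuous loop $[0,1]\to\calV(B)$ based at $1$, where one takes care with the unitisation conventions (for unital $B$ one realises $\widetilde{SB}$ using the constant function $1_B$; for non-unital $B$ the constraint $q=1$ forces the loop to take values in $\calV(B)$ rather than only in $\calU(\tilde B)$). Iterating, $\calV(S^nB)\cong\Omega^n\calV(B)$, and similarly $\calV_\infty(S^nB)\cong\Omega^n\calV_\infty(B)$, using for the infinite stages that a continuous image of a compact space inside the increasing union $\calV_\infty(\cdot)$ lies within $\varepsilon$ of a finite stage, so that homotopy groups commute with the inductive limit. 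These identifications are natural in the algebra, hence compatible with the maps induced by $i_{1,\infty}$, so $\pi_n(i_{1,\infty})$ for $B$ is identified with $\pi_0(i_{1,\infty})$ for the $\calZ$-stable algebra $S^nB$. It therefore suffices to handle $n=0$.

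For $n=0$, the $K$-theory input enters. For any C*-algebra $C$ there is a canonical group homomorphism $\pi_0(\calV(C))\to K_1(C)$, $[u]\mapsto[u]_1$, sitting in a commuting triangle with the corresponding map $\pi_0(\calV_\infty(C))\to K_1(C)$ and with $\pi_0(i_{1,\infty})$. The map $\pi_0(\calV_\infty(C))\to K_1(C)$ is always an isomorphism, by the definition of $K_1$ together with correcting a representing unitary, respectively a connecting path, by the scalar unitaries $q(\cdot)^{-1}$ so as to remain inside $\calV_\infty(C)$. When $C$ is $\calZ$-stable it is $K_1$-surjective and $K_1$-injective by Theorems~\ref{K1-sur} and~\ref{K1-inj}, and the same scalar-correction device --- which is precisely the one appearing in the proof of Theorem~\ref{K1-inj} --- promotes these to surjectivity and injectivity of $\pi_0(\calV(C))\to K_1(C)$: if a class $[u]_1$ is given with $u\in\calU(\tilde C)$, then $\overline{q(u)}\,u\in\calV(C)$ still represents it; and if $u\in\calV(C)$ has $[u]_1=0$, a path from $1$ to $u$ in $\calU(\tilde C)$ becomes a path inside $\calV(C)$ after multiplying pointwise by $\overline{q(u_t)}$. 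Hence $\pi_0(\calV(C))\to K_1(C)$ is an isomorphism, and the commuting triangle forces $\pi_0(i_{1,\infty})$ to be one, completing the argument.

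The step I expect to be most delicate is organisational rather than mathematical: fixing the unitisation conventions consistently throughout the suspension identification $\calV(S^nB)\cong\Omega^n\calV(B)$ and its $\calV_\infty$-analogue, and checking that the various identifications --- $\calV_\infty(M_m\otimes A)\cong\calV_\infty(A)$, the loop-space descriptions, and the passage from $\calV_\infty$ to the inductive limit of the $\calV_m$ on homotopy groups --- are natural enough that all the relevant squares and triangles of homotopy groups commute. The genuine content is supplied entirely by Theorems~\ref{K1-inj} and~\ref{K1-sur} and by Bott periodicity \eqref{bott}.
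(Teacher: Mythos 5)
Your proposal is correct and follows essentially the same route as the paper: reduce to the $\pi_0$ statement for all $\calZ$-stable algebras via suspension (the paper cites Thomsen's identification $\pi_n(\calV(A))\cong\pi_0(\calV(S^nA))$ where you unwind the loop space directly) and via $\calZ$-stability of $M_m\otimes S^nA$, then settle $\pi_0$ by comparing $\pi_0(\calV(\cdot))$ and $\pi_0(\calV_\infty(\cdot))$ with $K_1$ using Theorems~\ref{K1-inj} and~\ref{K1-sur} together with the scalar-correction trick. The only cosmetic difference is that the paper routes the $\pi_0$ comparison through $\pi_0(\calU(\tilde A))$ rather than mapping $\pi_0(\calV(A))$ to $K_1(A)$ directly, which amounts to the same commuting diagram.
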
 

\begin{proof} 
When $A$ is $\calZ$-stable, then $A$ is $K_{1}$-injective and $K_{1}$-surjective by Theorem \ref{K1-inj} and Theorem \ref{K1-sur}, which implies $\pi_{0} (\calU(\tilde{A})) \cong K_{1}(\tilde{A}) \cong \pi_{0}(\calU_{\infty}(\tilde{A}))$ by (\ref{bott}). Since $\pi_{0}(\iota_{0})$ and $\pi_{0}(\iota_{\infty})$ are isomorphisms, then $i_{1, \infty}$ induces an isomorphism through 
\begin{equation} \label{main_iso} 
\pi_{0}(\calV(A)) \cong \pi_{0}(\calU(\tilde{A})) \cong \pi_{0}(\calU_{\infty}(\tilde{A})) \cong  \pi_{0}(\calV_{\infty}(A)) = K_{1}(A). 
\end{equation} 
Since $\calZ$-stability of $A$ implies that of $M_{m}\otimes A$, then the maps $i_{m, \infty}$ induce $\pi_{0}(\calV_{m}(A)) \cong K_{1}(A)$ for $m\geq 1$ by the argument above and  thus all the maps $\pi_{0}(i_{m, m+1})$ are isomorphisms. 

For $n\geq 1$, there is a canonical identification between $\pi_{n}(\calV(A))$ and $\pi_{0}(\calV(S^{n}A))$ by \cite[Lemma 2.3]{Thomsen}, where $S^{n}A = C_{0}((0,1)^{n})\otimes A$ is the $n^{th}$ suspension of $A$. When $A$ is $\calZ$-stable, then $S^{n}(M_{m}(A))$ is $\calZ$-stable for any $m, n\geq 1$. Thus by (\ref{main_iso}), the maps $i_{m, \infty}$ induce the following isomorphisms for $m\geq 1$, 
\begin{equation} 
\pi_{n}(\calV_{m}(A)) = \pi_{0}(\calV(S^{n}(M_{m}(A)))) \cong K_{1}((S^{n} (M_{m}(A))) \cong K_{n+1}(A). 
\end{equation} 
Thus $\pi_{n}(i_{m, m+1})$ are isomorphisms for $m\geq 1$ and $n\geq 0$, which implies $K$-stability of $A$. Further by Bott periodicity of $K$-theory, for $m\geq 1$ and $n\geq 0$, 
\begin{equation} 
\pi_{n}(\calV_{m}(A)) \cong 
\begin{cases} 
      K_{0}(A) & \text{if $n$ is odd, } \\
      K_{1}(A) & \text{if $n$ is even. }
\end{cases}
\end{equation} 
In particular, we get (\ref{final}). 
\end{proof}

\bibliographystyle{abbrv} 
\bibliography{references}

\ 

\noindent Shanshan Hua, Mathematical Institute, University of Oxford, Oxford, OX2 6GG, UK. 

\noindent Email address: \texttt{shanshan.hua@maths.ox.ac.uk} 

\end{document}